\newlength{\spc} 
\newcommand{\states}{\mathcal{S}}
\newcommand{\actions}{\mathcal{A}}
\newcommand{\tss}[1]{\textrm{#1}}
\theoremstyle{plain}
\newtheorem{theorem}{Theorem}[section]
\newtheorem{lemma}[theorem]{Lemma}
\newtheorem{corollary}[theorem]{Corollary}
\title{Welfare Maximization Algorithm for Solving Budget-Constrained Multi-Component POMDPs
}
\author{
  Manav Vora\\
  Department of Aerospace Engineering \\
  University of Illinois Urbana-Champaign \\
  \texttt{mkvora2@illinois.edu} \\
   \And
  Pranay Thangeda \\
  Department of Aerospace Engineering \\
  University of Illinois Urbana-Champaign \\
  \texttt{pranayt2@illinois.edu} \\
    \And
  Michael N. Grussing \\
   Engineer Research and Development Center \\
  US Army Corps of Engineers \\
  \texttt{michael.n.grussing@erdc.dren.mil} \\
     \And
  Melkior Ornik \\
  Department of Aerospace Engineering \\
  University of Illinois Urbana-Champaign \\
  \texttt{mornik@illinois.edu} \\
}
\begin{document}
\maketitle

\begin{abstract}
Partially Observable Markov Decision Processes (POMDPs) provide an efficient way to model real-world sequential decision making processes. Motivated by the problem of maintenance and inspection of a group of infrastructure components with independent dynamics, this paper presents an algorithm to find the optimal policy for a multi-component budget-constrained POMDP. We first introduce a budgeted-POMDP model (\textit{b-POMDP}) which enables us to find the optimal policy for a POMDP while adhering to budget constraints. Next, we prove that the value function or maximal collected reward for a special class of b-POMDPs is a concave function of the budget for the finite horizon case. Our second contribution is an algorithm to calculate the optimal policy for a multi-component budget-constrained POMDP by finding the optimal budget split among the individual component POMDPs. The optimal budget split is posed as a welfare maximization problem and the solution is computed by exploiting the concavity of the value function. We illustrate the effectiveness of the proposed algorithm by proposing a maintenance and inspection policy for a group of real-world infrastructure components with different deterioration dynamics, inspection and maintenance costs. We show that the proposed algorithm vastly outperforms the policies currently used in practice.
\end{abstract}

\keywords{POMDPs \and Budget \and Welfare \and Optimization \and Infrastructure}

\section{Introduction}
Sequential decision-making is an integral component of many real world problems like machine maintenance, structural inspection and autonomous robotics \cite{Cassandra2003ASO}. Markov Decision Processes (MDPs) have provided an efficient framework to model and solve such problems while accounting for the corresponding uncertainty \cite{putterman}. POMDPs are a generalized version of MDPs, allowing for more uncertainty to be accounted for in the form of partial observability of the state of the system \cite{astrompomdp}. However, finding optimal policies for POMDPs is much more computationally intensive as compared to MDPs and has been proven to be PSPACE-complete \cite{complexitypomdp}. Synthesis of optimal policies for POMDPs is a classical problem and many algorithms have been proposed for the same \cite{finitestatecontroller, pruning, heuristicVI, NIPS2010_edfbe1af}.

This paper considers optimal planning for a class of structured budget-constrained POMDPs. This setting is motivated by infrastructure maintenance planning -- a widely studied problem \cite{mishra,infra,hcrl} that involves finding the optimal policy for maintenance and inspection, of an infrastructure component or a group of components within a certain budget\cite{deeprl,subway}. For simplicity of planning, the stochastic dynamics of multi-component systems are modelled using a POMDP \cite{intinspect,maintplan}. Also, the dynamics of individual components are assumed to be independent of each other \cite{ANDRIOTIS2021107551}. We will thus model such a setting by multi-component budget-constrained POMDPs where the transition probabilities of the individual component POMDPs are decoupled from each other. Thus we can say that the individual component POMDPs are weakly-coupled in the sense that they are only connected by the shared total budget of the multi-component POMDP. 


An algorithm for solving cost-constrained POMDPs has been proposed in \cite{CC-POMCP}. However, this algorithm becomes computationally infeasible for multi-component POMDPs with very large state spaces. A POMDP-based solution for optimal maintenance and inspection of structures using Dynamic Bayesian Networks is presented in \cite{optmaint}. However, in addition to computational infeasibility, this algorithm does not account for budget constraints. Optimal allocation for MDPs has been studied in \cite{stochasticranking}. The paper models the statistical ranking and selection problem as an MDP and derives an approximately optimal allocation policy using value function approximation. In our work, we study optimal budget allocation for multi-component POMDPs, for infrastructure management. A method for solving budget-constrained MDPs has been presented in \cite{cmdps}. The paper introduces a budgeted-MDP model which includes the budget as an implicit part of the state. This is because the paper uses a cost function, similar to Constrained MDPs \cite{cmdp1}, to keep a track of the cost incurred by the policy. Hence, this algorithm cannot be directly extended to POMDPs because the partial observability of the state would cause a violation of the budget constraint in some cases. 

In this work, we propose a computationally efficient algorithm for optimal policy synthesis of a multi-component budget-constrained POMDP.
Our contributions here are:
\begin{itemize}
    \item we introduce a b-POMDP model to facilitate strict adherence to budget constraints in POMDPs,
    \item we obtain an approximately optimal policy for multi-component POMDPs by finding the optimal budget split among the individual component POMDPs.
\end{itemize}
The b-POMDP model includes the total cost incurred upto a time instant $k$ explicitly as a part of the state vector. We show that the value function for a particular class of b-POMDPs is a concave function of the budget. Next, we compute the optimal policy for the individual component POMDPs by modeling them as b-POMDPs and using an online solver like POMCP \cite{NIPS2010_edfbe1af}. Doing so gives us the approximate maximal total reward collected by the policy in terms of the budget allocated to the b-POMDP. We use these rewards to calculate the optimum distribution of the total budget among the individual component b-POMDPs and find the approximately optimal policy of the multi-component POMDP. The budget splitting is posed as a welfare maximization problem constrained by the total budget of the multi-component POMDP. The concave nature of the value function renders this as a convex optimization problem, thus guaranteeing a global optimum. We demonstrate the utility of the proposed algorithm by finding optimal maintenance and inspection policies for multiple components of a realistic general administrative building, subject to a budget. Based on this real data, we show that our algorithm vastly outperforms the policy currently used in practice.

\section{Preliminaries and Background}
\label{sec:prelim}
In this section, we provide background on Partially Observable Markov Decision Processes for sequential decision-making with stochastic dynamics. We start by defining the notation used in the paper. 

Given a finite set $\actions$, $|\actions|$ denotes its cardinality and $\Delta(\actions)$ denotes the set of all probability distributions over the set $\actions$. Notation $\mathbb{N}_0$ denotes the set of natural numbers including 0 i.e. $\mathbb{N}_0 = \{0,1,2, \ldots\}$. The symbols $\lfloor . \rfloor$ and $\lceil . \rceil$ denote the floor  and ceiling functions respectively.

\subsection{Partially Observable Markov Decision Process}
A discrete-time finite-horizon POMDP \cite{pomdpcassandra, Braziunas-POMDPsurvey} $M$ is specified by the 8-tuple $(\states, A, \pi, T, \Omega, O, R, H)$, where $\states$ denotes a finite set of states, $A$ denotes a finite set of actions, $\pi : \states \rightarrow A$ denotes the policy which specifies the action to take in a given state $s$ and $T :  \states\times A \rightarrow \Delta(\states)$ denotes the transition probability function, where $\Delta(\states)$ is the space of probability distributions over $\states$. Furthermore, $\Omega$ denotes a finite set of observations and $O : \Omega\times\states\times A \rightarrow \Delta(\Omega)$ denotes the observation probability function where $\Delta(\Omega)$ is analogous to $\Delta(\states)$. Finally, $R : \states \times A \rightarrow [0, R_\tss{max}]$ denotes the reward function and $H \in \mathbb{N}_0$ denotes the finite planning horizon.

For the above POMDP, at each time step, the environment is in some state $s \in \states$ and the agent interacts with the environment by taking an action $a \in A$. Doing so results in the environment transitioning to a new state $\bar{s} \in \states$ in the next time step with probability $T(s,a,\bar{s})$. Simultaneously, the agent receives an observation $o \in \Omega$ regarding the state of the environment with probability $O(o|\bar{s},a)$ which depends on the new state of the environment and the action taken by the agent. In a POMDP the agent doesn't have access to the true state of the environment. However, the agent can update it's belief about the true state of the environment using this observation. The agent also receives a reward $R(s,a)$. 

The problem of optimal policy synthesis for a finite-horizon POMDP is that of choosing a sequence of actions which maximizes the expected total reward. 

\section{Problem Formulation}
\label{sec:problem}
We consider a multi-component POMDP, which is a collection of $n$ component POMDPs. The component POMDPs are weakly-coupled in the sense that they have independent transition probabilities and are connected only by the shared total budget. In this paper, we consider optimal policy synthesis for POMDPs with budget, i.e., each action incurs a cost and the total cost incurred by the optimal policy is limited by the budget. We first formally define the multi-component POMDP with a budget and then define the problem of finding the optimal policy for such a POMDP.

\subsection{Multi-Component Decoupled POMDP with Shared Budget}
\label{subsec:multicomponent}
For a multi-component POMDP, the state space $\states \subseteq \mathbb{N}_0^n$ is given by $\states = \states^1 \times \states^2 \times \ldots \times \states^n$. The state space $\states^i \in \mathbb{N}_0$, for component $i$, is given by $\states^i = \{0,1,2, \ldots, s_{max}\}$, where $s_{max} \in \mathbb{N}_0$.The state $s_k \in \states$, at time step $k$ is given by $s_k = \{s^1_k,s^2_k, \ldots, s^n_k\}$ where $s^i_k \in \states^i$ represents the state of component $i$ at time step $k$. 
The action space is given by 
\begin{equation*}
    A = \prod_{i = 1}^n A^i,
\end{equation*}
where the action space $A^i$ for component $i$ is given by $A^i = \{d^i,q^i,m^i\}$. Action $d^i$ lets the component move to a new state according to the transition probabilities. The action $q^i$ provides an observation which is equal to the next state $\bar{s}^i$ of the component and action $m^i$ drives the component state to $s_{max}$. The transition probability function for the multi-component POMDP for $s,\bar{s} \in \states$ and $a \in A$ is given by
\begin{equation*}
    T(s,a,\bar{s}) = \prod_{i=1}^n T^i(s^i,a^i,\bar{s}^i).
\end{equation*}
$T^i$ denotes the transition probability function for component $i$ and is defined as
\begin{equation}
    T^i(s,a,\bar{s}^i) = \begin{cases}
    1,& \text{if } \bar{s}^i = s_{max} \text{ and } a^i = m^i, \\
    p^i(s^i,a^i,\bar{s}^i),& \text{if } \bar{s}^i \leq s^i \text{ and } a^i \in \{d^i, q^i\},\\
    1,& \text{if } \bar{s}^i = 0 = s^i \text{ and } a^i \in A^i,\\
    0,& \text{otherwise}.
    \end{cases}
\end{equation}
The probability $p^i(s^i,a^i,\bar{s}^i)$ is chosen according to a probability distribution specific to component $i$. From the above equation, it can be observed that $0$ is an absorbing state. 

The observation space is given by $\Omega = \states \cup \{e\}$, where $e \in \mathbb{N}_0$ is an observation that does not provide any information regarding the true state of the system, i.e., $e \notin \states^i$ for all $i \in \{1,2,\ldots,n\}$. The observation function for the multi-component POMDP is given by 
\begin{equation*}
    O(\bar{s},a,o) = \prod_{i=0}^n O^i(\bar{s}^i,a^i,o^i).
\end{equation*}
Here, $O^i$ is the observation probability function for component $i$ and is defined as
\begin{equation*}
    O^i(\bar{s}^i,a^i,o^i) = \begin{cases}
        1,& \text{if } o^i = \bar{s}^i \text{ and } a^i \in \{q^i, m^i\} \\
        1,& \text{if } o^i = e \text{ and } a^i = d^i \\
        0,& \text{otherwise}.
    \end{cases}
\end{equation*}

For each component $i$, each action $d^i,q^i$ and $m^i$ incurs a cost $c_d^i, c_q^i$ and $c_m^i$ respectively, against a total budget $B$. 

\subsection{Problem Statement}
For a multi-component POMDP, given by the formulation in the previous section, we consider the problem of finding an optimal policy $\pi^*$ which maximizes the time before reaching the absorbing state. Mathematically, $\pi^*$ maximizes $t$ such that $s_t > 0$. Furthermore, for a horizon of length $H$, $\pi^*$ should be such that the total cost incurred for the multi-component POMDP does not exceed the total budget. 
We propose an approximately optimal solution for the above problem through a two-step approach. In the first step, we will solve a single-component POMDP for any given budget. In the second step, we will partition the total budget $B$ into budgets for each individual component.

\section{Solution Approach}
\label{sec:solution}
In this section, we detail our methodology for solving the problem of optimal policy synthesis of a multi-component POMDP. First, we introduce the b-POMDP model and discuss how to solve a single b-POMDP. Next, we discuss why the value function for such a POMDP is a concave function of the budget. Finally, we present our proposed method for finding the optimal policy for an $n$-component POMDP by computing the optimal split of the total budget, among the individual component POMDPs.

\subsection{Budgeted-POMDP Model (b-POMDP)}
\label{subsec:budgeted-POMDP}

Our main goal is to find an optimal policy for a POMDP while adhering to a total budget for actions. The budgeted-MDP model in \cite{cmdps} tracks the incurred cost using a cost function similar to Constrained MDPs \cite{cmdp1}. This model can't be extended directly to a POMDP because the partial observability of the state would lead to budget violation in some cases. Hence, we introduce a new b-POMDP model. In a b-POMDP, the budget constraint is incorporated by augmenting the total cost incurred upto time instant k, to each state of the state space. Thus, if we consider a single component POMDP with a total budget $B$, the modified state at time instant $k$ according to the b-POMDP formulation is given by $(s_k,c_k)$ where $s_k$ is as defined in the previous section, for $i = 1$. We assume that unlike $s_k$, $c_k$ is completely observable at all time instants. The transition function for the cost component of the state is given by:
\begin{equation*}
    T_c(c^{\prime}|c,a) = \begin{cases}
    1,& \text{if } c^{\prime} = c+c_m \text{ and } a = m \\
    1,& \text{if } c^{\prime} = c+c_q \text{ and } a = q \\
    1,& \text{if } c^{\prime} = c+c_d \text{ and } a = d \\
    0,& \text{ otherwise }.
    \end{cases}
\end{equation*}
The transition function for the overall b-POMDP is:
\begin{equation}
    T^\prime((s,c),a,(\bar{s},c^\prime)) = T(s,a, \bar{s})T_c(c,a,c^{\prime}),
\end{equation}
where $T(s,a,\bar{s})$ is defined in Section \ref{subsec:multicomponent}.
The new formulation prevents the policy from violating the budget at any time instant $k$. This is done 
by making $c_k > min\{B-c_m,B-c_i\}$ an absorbing state, similar to $s=0$. The reward function for the b-POMDP is given by:
\begin{equation*}
    R^\prime((s,c),a) = \begin{cases}
        r_1 > 0,& \text{if } s > 0, \\
        r_2 = 0,& \text{if } s = 0.
        \end{cases}
\end{equation*}

To find an optimal policy for a b-POMDP, we use the method of Monte-Carlo Planning in POMDPs (POMCP \cite{NIPS2010_edfbe1af}). POMCP is an online planning algorithm for large POMDPs, which combines a Monte-Carlo update of the agent’s belief with a Monte-Carlo tree search for the best action from the current belief state. 
For a b-POMDP, the maximal collected reward (value function) obtained using POMCP will be a function of the budget $B$ associated with it. We will now prove that this value function is concave in the budget for a special subclass of our overall problem.

\subsection{Proof for Concavity of Optimal Value Function}
\label{subsec:proof}
Consider an MDP with state space $\states_{MDP} = \{0,1,2 \ldots s_{max}\}$, where $s_{max} \in \mathbb{N}_0$ and action space is $A_{MDP} = \{m,d\}$. The state of the system decreases by a value $d_0 \in \mathbb{N}_0$ unless we perform action $m$. The transition probability function is defined as:
\begin{equation*}
    T(s^{\prime}|s,a) = \begin{cases}
    1,& \text{if } s^{\prime} = s_{max} \text{ and } a = m \\
    1,& \text{if } s^{\prime} = s-d_0 \text{ and } a = d\\
    1,& \text{if } s^{\prime} = 0 = s \text{ and } a \in A_{MDP}\\
    0,& \text{otherwise,}
    \end{cases}
\end{equation*}
From the above transition function, we can clearly see that state $0$ is an absorbing state. 
Also, $d_0$ is a decrease in the state value. The cost for the $d$ action is $c_d = 0$ and the cost for the $m$ action is $c_m > 0$. For simplicity, assume that $c_m = 1$. Let the available budget be denoted by $b \in \mathbb{N}_0$. This means that we can perform the action $m$ at most $b$ times. 

The reward function is similar to our original problem with a constant positive reward $r$ for all states $s > 0$ and a zero reward for $s=0$.

The value function for an MDP is the total expected reward collected by the optimal policy. For a b-POMDP, the value function is a function of the state value and the available budget. Let $V_H(s,b)$ represent the value function of the state and budget for a given horizon with $H$ steps to go.

The following lemma will be used for proving the concavity of the value function for the above mentioned MDP with respect to the budget.

\begin{lemma} \label{lemma:1}
For a given budget $b$ and horizon $H$, the value function $V_H(s_0,b)$ is an increasing function of the state $s_0$, i.e., for two states $s_0$ and $s_0^\prime$ such that $s_0 < s_0^\prime$, the following holds:
\begin{equation*}
    V_H(s_0^\prime,b) \geq V_H(s_0,b).
\end{equation*}
\end{lemma}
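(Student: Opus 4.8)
The plan is to prove the claim by backward induction on the horizon $H$, establishing the stronger inductive hypothesis that there is an optimal action achievable from the larger state $s_0^\prime$ that tracks the optimal play from $s_0$ while always staying at a weakly larger state value. The base case $H = 0$ is immediate: with no steps to go, $V_0(s_0,b) = V_0(s_0^\prime,b) = 0$ (or whatever the terminal value is), and in any case it does not depend on the state, so the inequality holds with equality.

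For the inductive step, fix a budget $b$ and suppose the statement holds for horizon $H-1$. Starting from $s_0^\prime > s_0$, I would use a coupling argument: let the agent in state $s_0^\prime$ mimic the optimal policy for state $s_0$ action-for-action. Because the dynamics here are deterministic — action $d$ subtracts $d_0$ and action $m$ resets to $s_{max}$ — the two trajectories stay synchronized in the sense that if $s_k \le s_k'$ before an action, then after the same action $s_{k+1} \le s_{k+1}'$: under $d$ both decrease by $d_0$ (preserving the gap, with the caveat that the absorbing state $0$ must be handled — if $s_k = 0$ it stays $0$ while $s_k'$ may still move, which only helps), and under $m$ both jump to $s_{max}$, equalizing them. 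The per-step reward $r$ is earned whenever the current state is positive, so at every step the mimicking trajectory from $s_0^\prime$ earns at least as much instantaneous reward as the optimal trajectory from $s_0$ (it is positive whenever the $s_0$-trajectory is, and possibly at additional steps). Budget consumption is identical since the two trajectories take the same actions. Summing over the horizon, the mimicking policy collects total expected reward $\ge V_H(s_0,b)$, and since $V_H(s_0^\prime,b)$ is the supremum over all policies, $V_H(s_0^\prime,b) \ge V_H(s_0,b)$.

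To make the coupling precise I would phrase it via the Bellman recursion $V_H(s,b) = \max_{a}\{ R(s,a) + V_{H-1}(T(s^\prime\mid s,a), b - c_a)\}$ rather than over full trajectories, which avoids bookkeeping over the whole path: it suffices to show that for each action $a$ available and affordable at $s_0$, the same action is affordable at $s_0^\prime$ (true, since affordability depends only on $b$ and $c_a$, not on the state) and yields a Bellman value from $s_0^\prime$ that is at least the corresponding value from $s_0$, using $R(s_0^\prime,a) \ge R(s_0,a)$ (the reward is $r$ whenever the state is positive and $0$ otherwise, so $s_0 < s_0^\prime$ with $s_0$ possibly $0$ gives $R(s_0^\prime,a)\ge R(s_0,a)$) together with the induction hypothesis applied to the successor states, after checking $T(\cdot\mid s_0,a) \le T(\cdot\mid s_0',a)$ in the natural (deterministic) ordering of successor states.

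The main obstacle is the absorbing state $0$: the monotonicity "$s_0 \le s_0' \Rightarrow$ successor of $s_0 \le$ successor of $s_0'$" can fail naively when $s_0 - d_0 < 0$ is clamped, or more subtly when $s_0$ has already been absorbed at $0$ while $s_0'$ is still alive — I need to argue that this asymmetry only ever increases the value from $s_0'$ (the live trajectory keeps the option of earning $r$), never decreases it, and in particular that once the larger trajectory reaches $0$ the two states have had the gap collapse through an $m$ action or the smaller one is also $0$. A clean way to sidestep edge cases is to note that the state sequence under any fixed action sequence is a nonincreasing-then-reset pattern and that the value is monotone in each "segment"; but the inductive Bellman argument above, with careful treatment of the $s_0 = 0$ case as a separate (trivial) base-like subcase, should suffice.
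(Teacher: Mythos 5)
Your proposal is correct and follows essentially the same route as the paper: start the larger state $s_0^\prime$ on the optimal play for $s_0$, argue the mimicking return is at least $V_H(s_0,b)$, and then bound $V_H(s_0^\prime,b)$ below by that policy's value. The paper simply asserts that following $\pi^*$ from $s_0^\prime$ does at least as well, without the coupling/monotonicity details and absorbing-state caveats you supply, so your version is the same argument made rigorous.
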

\begin{proof}
Let $\pi^*$ be a policy that generates the value function $V_H(s_0,b)$, in the sense of maximizing the expected total return over the horizon $H$ given the initial state $s_0$ and budget $b$. Thus, we have $V_H^{\pi^*}(s_0,b) = V_H(s_0,b)$. If we start at state $s_0^\prime > s_0$, then following the same policy $\pi^{*}$ the expected return will be at least as good as that for $s_0$. We can therefore say that:
\begin{equation*}
    E\left[\sum_{t=0}^{H} r_t \Big\vert s_0^\prime, \pi^{*}, b\right] \geq E\left[\sum_{t=0}^{H} r_t \Big\vert s_0, \pi^*, b\right].
\end{equation*}
Hence, we can say that for a given budget $b$ and horizon $H$, the optimal policy for $s_0^\prime$ is atleast as good as the optimal policy for $s_0$ and hence:
\begin{equation*}
    V_H(s_0^\prime,b) \geq V_H^{\pi^*}(s_0^\prime,b) \geq V_H^{\pi^*}(s_0,b) = V_H(s_0,b) \quad \forall s_0^\prime > s_0
\end{equation*}
\end{proof}
\begin{lemma}\label{lemma:2}
    For a given initial state $s_0$ and horizon $H$, the value function $V_H(s_0,b)$ is an increasing function of the budget $b$, i.e., for two budgets $b$ and $b^\prime$ such that $b < b^\prime$, the following holds:
\begin{equation*}
    V_H(s_0,b^\prime) \geq V_H(s_0,b).
\end{equation*}
\end{lemma}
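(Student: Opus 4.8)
The plan is to mirror the argument used for Lemma~\ref{lemma:1}, exploiting the fact that increasing the budget only enlarges the set of admissible policies (equivalently, admissible action sequences). First I would let $\pi^*$ be a policy attaining $V_H(s_0,b)$, so that $V_H^{\pi^*}(s_0,b) = V_H(s_0,b)$. The key observation is that $\pi^*$ never uses action $m$ more than $b$ times along any trajectory, since it is admissible for budget $b$; hence the same policy, read as a map from the augmented state $(s,c)$ with $c$ the cost incurred so far, is also admissible when the starting budget is $b' > b$ — the extra slack $b' - b$ is simply never exercised. Running $\pi^*$ from $(s_0,0)$ with budget $b'$ therefore induces exactly the same distribution over trajectories and the same expected return as with budget $b$, so $V_H^{\pi^*}(s_0,b') = V_H^{\pi^*}(s_0,b)$.

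Combining this with optimality of $V_H(s_0,b')$ over all policies admissible for budget $b'$, I would conclude
\begin{equation*}
V_H(s_0,b') \ge V_H^{\pi^*}(s_0,b') = V_H^{\pi^*}(s_0,b) = V_H(s_0,b),
\end{equation*}
which is the claim.

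An alternative, more self-contained route is induction on the horizon $H$ via the Bellman recursion: for $s_0 > 0$,
\begin{equation*}
V_H(s_0,b) = r + \max\Bigl\{ V_{H-1}\bigl(\max\{s_0 - d_0, 0\},\, b\bigr),\; \Indi[b \ge 1]\, V_{H-1}(s_{max},\, b-1)\Bigr\},
\end{equation*}
with $V_0(\cdot,\cdot)$ constant in $b$ as the base case. Assuming $V_{H-1}(s,\cdot)$ is nondecreasing for every $s$, monotonicity of each term on the right-hand side in $b$ — together with the fact that the $m$-branch is available for $b'$ whenever it is available for $b$, since $b' > b$ — yields $V_H(s_0,b) \le V_H(s_0,b')$; the $s_0 = 0$ case is immediate because $0$ is absorbing with zero reward.

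I do not anticipate a genuine obstacle: both arguments are short. The only point requiring a little care is the bookkeeping around the edge case $b = 0$, where $m$ is unavailable, so that one must verify the ``extra option'' in the recursion (or the admissibility of reusing $\pi^*$) is truly monotone and not spuriously lost; this is handled by the indicator $\Indi[b \ge 1]$ above and is entirely routine.
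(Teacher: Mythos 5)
Your primary argument is correct and is essentially the paper's own proof: take the policy $\pi^*$ optimal for budget $b$, observe it remains admissible (and induces the same trajectories and return) under the larger budget $b'$, and invoke optimality of $V_H(s_0,b')$ over the larger policy class. Your version is in fact slightly more careful than the paper's in reading $\pi^*$ as a function of accumulated cost so that the trajectory distribution is literally unchanged; the Bellman-recursion alternative you sketch is a valid second route but is not needed.
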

\begin{proof}
    Let $\pi^*$ be a policy that generates the value function $V_H(s_0,b)$, in the sense of maximizing the expected total return over the horizon $H$ given the initial state $s_0$ and budget $b$. The same policy will also be optimal for the same initial state $s_0$ but budget $b^\prime$. This is because following the same policy will result in the same trajectory of states for both budgets, but with a higher budget at each time step for $b^\prime$. Thus, we have
    \begin{equation*}
         E\left[\sum_{t=0}^{H} r_t \Big\vert s_0, \pi^{*}, b^\prime\right] \geq E\left[\sum_{t=0}^{H} r_t \Big\vert s_0, \pi^*, b\right].
    \end{equation*}
    Hence, the value function generated by this policy with budget $b^\prime$ will be at least as good as the value function generated by the same policy with budget $b$. Since this holds for any optimal policy, it follows that:
\begin{equation*}
    V_H(s_0,b^\prime) \geq V_H(s_0,b) \quad \forall s_0, b^\prime > b
\end{equation*}
\end{proof}

\begin{lemma}\label{lemma:3}
    For a given initial state $s_0 > d_0$ and horizon $H$, if $b > 0$, the optimal action to take at time $t=0$ is $d$.
\end{lemma}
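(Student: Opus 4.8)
The plan is to show that the action-value of playing $d$ at $t=0$ is at least the action-value of playing $m$, so that $d$ is an optimal first action. Denote by $Q_H(s_0,b,a)$ the expected total return over the horizon $H$ obtained by taking $a$ at $t=0$ from $(s_0,b)$ and then following an optimal policy; the optimal first action is any maximizer of $Q_H(s_0,b,\cdot)$ over $\{m,d\}$. Because $s_0>d_0\ge 0$, the state at $t=0$ is positive, so the immediate reward is $r$ for both actions. Action $d$ is free and sends the state to $s_0-d_0$, while action $m$ costs one unit of budget (affordable since $b>0$) and sends the state to $s_{max}$; hence $Q_H(s_0,b,d)=r+V_{H-1}(s_0-d_0,b)$ and $Q_H(s_0,b,m)=r+V_{H-1}(s_{max},b-1)$. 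So the lemma reduces to proving
\[
V_{H-1}(s_0-d_0,\,b)\ \ge\ V_{H-1}(s_{max},\,b-1),
\]
where I note that $s_0-d_0\ge 1$ since $s_0$ and $d_0$ are integers with $s_0>d_0$.

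The core step I would isolate as a claim: for every horizon $h\ge 0$, every budget $b\ge 1$, and every state $s\ge 1$,
\[
V_h(s,b)\ \ge\ V_h(s_{max},b-1).
\]
I would prove this \emph{without} induction on $h$, by sandwiching. For the lower bound, evaluate the (possibly suboptimal) policy that plays $m$ first from $(s,b)$: it collects $r$ (since $s\ge1$) and moves to $(s_{max},b-1)$, so $V_h(s,b)\ge r+V_{h-1}(s_{max},b-1)$ when $h\ge 1$, while for $h=0$ both sides equal $r$. For the upper bound, expand the Bellman optimality equation at $(s_{max},b-1)$ by one step: the immediate reward is $r$ (as $s_{max}\ge1$), and whichever action is taken the successor is some $(s',b')$ with $s'\le s_{max}$ and $b'\le b-1$, so by Lemma~\ref{lemma:1} (monotonicity in the state) and Lemma~\ref{lemma:2} (monotonicity in the budget) the continuation value is at most $V_{h-1}(s_{max},b-1)$; thus $V_h(s_{max},b-1)\le r+V_{h-1}(s_{max},b-1)$. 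Combining the two bounds yields the claim.

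Finally I would apply the claim with $s=s_0-d_0\ge 1$, $h=H-1$, and $b>0$, obtaining $V_{H-1}(s_0-d_0,b)\ge V_{H-1}(s_{max},b-1)$ and hence $Q_H(s_0,b,d)\ge Q_H(s_0,b,m)$, so $d$ is optimal at $t=0$. The main obstacle is orienting the inequality correctly: Lemma~\ref{lemma:1} alone is unhelpful because $s_{max}\ge s_0-d_0$, so state-monotonicity favors the $m$-branch; the resolution is that the $d$-branch retains one extra unit of budget, and the claim quantifies exactly the fact that one unit of budget dominates the gap between an arbitrary positive state and $s_{max}$. Small details to check along the way are the boundary case $b-1=0$ (then $m$ is unavailable from $(s_{max},0)$, so the one-step upper bound uses only Lemma~\ref{lemma:1}) and the case $H=0$ (where both actions return $r$, so $d$ is trivially optimal); note also that the hypothesis $s_0>d_0\ge 0$ forces $s_0\ge 1$ and hence $s_{max}\ge 1$, so no separate nondegeneracy assumption is needed.
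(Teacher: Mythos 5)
Your proof is correct, but it takes a genuinely different route from the paper. The paper argues by a two-step lookahead: it enumerates the four two-action prefixes $md$, $dm$, $mm$, $dd$, writes each as $2r$ plus a value at horizon $H-2$, and uses Lemmas \ref{lemma:1} and \ref{lemma:2} to show that $dm$ dominates $md$ and $mm$ (since $(s_{max},b-1)$ coordinatewise dominates both $(s_{max}-d_0,b-1)$ and $(s_{max},b-2)$), then asserts that the $dd$ branch need not be compared. You instead do a one-step comparison of $Q_H(s_0,b,d)$ against $Q_H(s_0,b,m)$ and isolate the key fact as a standalone claim --- $V_h(s,b)\ge V_h(s_{max},b-1)$ for all $s\ge 1$, $b\ge 1$ --- proved by sandwiching: the lower bound comes from the suboptimal ``play $m$ now'' policy, and the upper bound from one Bellman expansion at $(s_{max},b-1)$ plus the two monotonicity lemmas. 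The two arguments use the same underlying monotonicity facts, but your packaging buys something concrete: the one-step $Q$-comparison makes it unnecessary to reason about which second action is taken in each branch, and in particular it removes the slightly informal step in the paper where the $dd$ case is dismissed without comparison (the paper's justification implicitly requires noting that an optimal prefix beginning with $m$ must be $md$ or $mm$, which is not spelled out). Your explicit handling of the boundary cases $H=0$ and $b-1=0$, and the observation that state-monotonicity alone points the wrong way so the extra unit of budget is what carries the inequality, are both correct and welcome; the argument is sound as written.
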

\begin{proof}
    We will compare four cases:
\begin{enumerate}
    \item We first take an action m and then action o. Doing so yields the maximal collected reward $V_{H}^{md}$ given by 
    \begin{equation}
    V_{H}^{md}(s_0,b) = 2r + V_{H-2}(s_{max}-d_0,b-1). \label{mo}
    \end{equation}
    \item We first take an action o and then action m. Doing so yields the maximal collected reward $V_{H+1}^{dm}$ given by
    \begin{equation}
    V_{H}^{dm}(s_0,b) = 2r + V_{H-2}(s_{max},b-1).\label{om}
    \end{equation}
    \item We first take an action m and then action m. Doing so yields the maximal collected reward $V_{H}^{mm}$ given by
    \begin{equation}
    V_{H}^{mm}(s_0,b) = 2r + V_{H-2}(s_{max},b-2).\label{mm}
    \end{equation}
    \item We first take an action o and then action o. Doing so yields the maximal collected reward $V_{H+1}^{dd}$ given by
    \begin{equation}
    V_{H}^{dd}(s_0,b) = 2r + V_{H-2}(s_0-2d_0,b).\label{oo}
    \end{equation}
\end{enumerate}
Using Lemma \ref{lemma:1}, we know that the value function is an increasing function of the state for the same budget. Also from \ref{lemma:2} we know that the value function is an increasing function of the budget for the same state. Using these results and \eqref{mo}, \eqref{om} and \eqref{mm}, we get that 
\begin{equation}
    V_{H}^{dm} = \max\{V_{H}^{md}, V_{H}^{dm}, V_{H}^{mm}\}. \label{o_optimal}
\end{equation}
We don't need to compare $V_{H}^{dd}(s_0,b)$ because \eqref{o_optimal} is sufficient to prove that for $s_0 > d_0$, action $o$ is the optimal initial action.
\end{proof}

\begin{lemma}\label{lemma:4}
    For a given initial state $0 < s_0 \leq d_0$ and horizon $H$, if $b > 0$, the optimal action to take at time $t=0$ is $m$.
\end{lemma}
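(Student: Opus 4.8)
The plan is to compare the only two admissible first actions directly, in the same spirit as the proof of Lemma~\ref{lemma:3}, but here the comparison is much simpler because the $d$-branch is immediately terminal. First I would record the crucial structural fact: since $0 < s_0 \le d_0$, performing action $d$ at time $t=0$ moves the state from $s_0$ to $s_0 - d_0 \le 0$, i.e. (after clipping at $0$) to the absorbing state $0$. From then on the state is $0$ forever, and state $0$ yields reward $0$, so the total expected return of any policy that plays $d$ at $t=0$ is exactly the reward $r$ collected at the initial state $s_0 > 0$, namely $r + V_{H-1}(0,b) = r$.

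Next I would analyze the alternative. Since $b>0$, action $m$ is admissible at $t=0$, and it sends the system to state $s_{max}$ with remaining budget $b-1$ and $H-1$ steps to go; hence the return of the policy that plays $m$ at $t=0$ and thereafter acts optimally equals $r + V_{H-1}(s_{max},b-1)$. I would then invoke non-negativity of the rewards --- equivalently, Lemma~\ref{lemma:1} together with $V_{H-1}(0,b-1)=0$ --- to conclude $V_{H-1}(s_{max},b-1) \ge 0$, so this return is at least $r$, i.e. at least the return obtained by playing $d$ first. Because $\{m,d\}$ is the entire action set, this shows $m$ is an optimal first action; in fact it is strictly optimal whenever $H \ge 2$, since then $V_{H-1}(s_{max},b-1)\ge r>0$.

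The only point requiring a little care --- and the closest thing to an obstacle --- is the boundary bookkeeping: making it precise that $s_0 - d_0$ is to be read as the absorbing state $0$ when $s_0 < d_0$, and equals $0$ when $s_0 = d_0$, and that consequently no reward is collected along the $d$-branch after $t=0$. Once that is pinned down, the conclusion follows immediately from non-negativity of the value function, and, unlike in Lemma~\ref{lemma:3}, there is no need to compare longer action prefixes.
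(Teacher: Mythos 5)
Your proposal is correct and follows essentially the same route as the paper's proof: compare the two admissible first actions, observe that playing $d$ from $0 < s_0 \le d_0$ lands in the absorbing state so the $d$-branch returns exactly $r$, while playing $m$ returns $r + V_{H-1}(s_{max},b-1) \ge r$. Your explicit handling of the clipping at state $0$ and of when the inequality is strict is slightly more careful than the paper's, but it is the same argument.
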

\begin{proof}
    We will consider two cases:
    \begin{enumerate}
        \item We take action $d$ in the first step. Doing so yields the maximal collected reward $V_H^o$ given by
        \begin{equation}
            V^d_{H}(s_0,b) = r + V_{H-1}(0,b), \label{ofirst}\\
        \end{equation}
        \item We take action $m$ in the first step. Doing so yields the maximal collected reward $V_H^m$ given by
        \begin{equation}
            V^m_{H}(s_0,b) = r + V_{H-1}(s_{max},b-1), \label{mfirst}
        \end{equation}
    \end{enumerate}
Clearly from \eqref{ofirst} and \eqref{mfirst}, taking action $d$ in the first step leads to the absorbing state and hence we get a total reward of $r$. Taking action $m$ gives the same reward $r$ and drives the state to $s_{max}$. This implies that the system will not reach the absorbing state for at least one more step and hence leads to higher total reward. Thus, we have
\begin{equation*}
    V^m_{H}(s_0,b) > V^d_{H}(s_0,b).
\end{equation*}
Hence, we can say that if $b > 0$, action $m$ is the optimal action to perform at $t=0$ when $0 < s_0 \leq d_0$.
\end{proof}

\begin{theorem} \label{theorem:1}
The value function for the MDP, $V_H(s_0,b)$, is concave in the budget $b$ for any horizon length $H \in \mathbb{N}_0$ and initial state $s_0 \in \states_{MDP}$. More specifically:
\begin{itemize}
    \item For $s_0 > d_0$, $V_H(s_0,b)$ is constant  with respect to the budget for all $b \geq \lfloor H/2 \rfloor$, where $\lfloor . \rfloor$ represents the floor function, 
    \item For $s_0 \leq d_0$, $V_H(s_0,b)$ is constant with respect to the budget for all $b \geq \lceil H/2 \rceil$, where $\lceil . \rceil$ represents the ceiling function,
    \item For budget values $b$,$b^\prime$ and $b^{\prime \prime}$ such that $b^{\prime\prime} = b^\prime + 1 = b + 2$ and $b \geq 0$, we have:
    \begin{equation}
        V_H(s_0,b^{\prime \prime}) - V_H(s_0,b^\prime) \leq V_H(s_0,b^{\prime}) - V_H(s_0,b), \label{relation}
    \end{equation}
\end{itemize}
\end{theorem}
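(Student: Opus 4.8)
The plan is to first pin down an explicit form of the optimal value function using Lemmas~\ref{lemma:1}--\ref{lemma:4}, and then obtain all three bullet points by elementary arithmetic on that form.

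\textbf{Step 1: structure of the optimal policy and value function.} Every transition in this MDP is deterministic, so an optimal policy just picks a fixed action sequence. Lemma~\ref{lemma:3} forces action $d$ whenever $s>d_0$ and $b>0$, Lemma~\ref{lemma:4} forces action $m$ whenever $0<s\le d_0$ and $b>0$, and when $b=0$ only $d$ is available; applying these at every step (a backward induction on $H$, carrying along the current state and remaining budget) shows that the optimal behaviour is a greedy renewal --- run $d$ until the state first lands in $(0,d_0]$, then reset with $m$, repeat, and keep running $d$ once the budget is spent. Writing $j^*=\lceil s_{max}/d_0\rceil-1$ for the number of $d$-steps that carry $s_{max}$ down into $(0,d_0]$, and counting how many steps the system stays positive when $k$ resets are used, this gives
\[
   V_H(s_0,b)=r\cdot\min\big\{\,H,\ \lceil s_0/d_0\rceil+b\,(j^*+1)\,\big\}\quad\text{for }s_0>d_0,
\]
$V_H(s_0,b)=r\cdot\min\{\,H,\ 1+b\,(j^*+1)\,\}$ for $0<s_0\le d_0$, and $V_H(0,b)=0$. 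I will assume $s_{max}>d_0$, the non-degenerate regime (and the only one in which a state $s_0>d_0$ can even arise); this yields $j^*\ge1$.

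\textbf{Step 2: reading off the three claims.} Each bullet is now a statement about the integer map $b\mapsto r\min\{H,\,A+bw\}$ with $A\ge1$ and integer slope $w=j^*+1\ge2$. For the first bullet, $A=\lceil s_0/d_0\rceil\ge2$ since $s_0>d_0$, so the $\min$ has locked onto $H$ as soon as $A+bw\ge H$, i.e.\ for $b\ge\lceil(H-A)/w\rceil$, and $\lceil(H-2)/2\rceil\le\lfloor H/2\rfloor$; hence $V_H(s_0,\cdot)$ is constant for $b\ge\lfloor H/2\rfloor$. The second bullet is the same computation with $A=1$ (the case $s_0=0$ being trivial), using $\lceil(H-1)/2\rceil\le\lceil H/2\rceil$. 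For the third bullet, the successive differences $V_H(s_0,b+1)-V_H(s_0,b)$ form a non-increasing sequence --- equal to $rw$ while $A+(b+1)w\le H$, a single value in $[0,rw]$ when the cap engages, and $0$ thereafter --- which is exactly \eqref{relation}; I would confirm it with the short three-case split on whether $A+b''w\le H$, $A+b'w<H\le A+b''w$, or $A+bw<H$.

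\textbf{Main obstacle.} The substance is entirely in Step~1: upgrading Lemmas~\ref{lemma:3}--\ref{lemma:4}, which only compare the first one or two actions, into the global statement that greedy renewal is optimal over the whole horizon, and then doing the survival count that produces the clean formula. This needs a careful backward induction on $H$ and attention to two boundary regimes --- the budget being exhausted (only $d$ remains, forcing pure decay, consistently with the formula) and the horizon being too short for even one decay (handled by the $\min\{H,\cdot\}$ cap). Everything after the formula is arithmetic. I would also handle the degenerate case $s_{max}\le d_0$ separately or exclude it explicitly, since there each reset buys only a single extra step and the saturation threshold becomes $H-1$ rather than $\lceil H/2\rceil$.
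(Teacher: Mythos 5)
Your route is sound and is genuinely different from the paper's. The paper never writes down the value function explicitly: it proves the three bullets directly by induction on $H$, using Lemma~\ref{lemma:3} to reduce $V_{H+1}(s_0,b)$ to $r+V_H(s_0-d_0,b)$ when $s_0>d_0$ and Lemma~\ref{lemma:4} to reduce it to $r+V_H(s_{max},b-1)$ when $0<s_0\le d_0$ and $b>0$, then propagating the saturation thresholds and the difference inequality \eqref{relation} through a case analysis (including the $b=0$ sub-cases that you fold into the $\min$ cap). Your plan instead pushes the same one-step lemmas through a backward induction once, to get a closed-form ``greedy renewal'' formula, and then reads all three bullets off as arithmetic facts about $b\mapsto r\min\{C,\,A+bw\}$; this is a strictly stronger intermediate result and makes the concavity transparent, at the cost of having to justify the global optimality of the renewal policy rather than only the first-step optimality (which, as you note, is where the real work sits --- the paper's induction avoids this by never needing more than the one-step reduction). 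Two details to fix when writing it out: the cap should be $H+1$ rather than $H$, since under the paper's convention $V_0(s_0,b)=r$ for $s_0>0$ (a horizon of $H$ collects up to $H+1$ rewards); this does not change any of the three conclusions because the saturation thresholds only move by at most one and remain below $\lfloor H/2\rfloor$ and $\lceil H/2\rceil$ respectively. Second, your observation about the degenerate case $s_{max}\le d_0$ is apt: there each reset buys only one step, the saturation threshold becomes roughly $H$, and the stated thresholds fail, so the assumption $s_{max}>d_0$ is genuinely needed --- the paper uses it implicitly (its inductive step for $s_0\le d_0$ invokes the $s_0>d_0$ saturation claim at the state $s_{max}$), so making it explicit as you propose is an improvement rather than a restriction.
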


\begin{proof}

First, we will show that the claims hold for a horizon of length 0. Then, using induction we will prove that they hold for a horizon of length $H$. 

Consider a horizon of length 0. For this case we have:
\begin{equation*}
    V_0(s_0,b) = \begin{cases}
        r,& \text{ if } s_0 > 0\\
        0,& \text{ if } s_0 = 0,
    \end{cases}
\end{equation*}
where $b \geq 0$. Clearly, $V_0(s_0,b)$ is constant in the budget for all $b \geq 0$ and thus, the first two claims hold. Also, for all values of $s_0 > 0$ we can say that:
\begin{equation*}
    V_0(s_0,b^{\prime\prime}) - V_0(s_0,b^\prime) = 0 \leq V_0(s_0,b^\prime) - V_0(s_0,b),
\end{equation*}
where $b^{\prime\prime} = b^\prime + 1 = b + 2$ and $b > 0$. 

Now, for a horizon of length $H$, assume the following:
\begin{itemize}
    \item For $s_0 > d_0$, $V_H(s_0,b)$ is constant  with respect to the budget for all $b \geq \lfloor H/2 \rfloor$, where $\lfloor . \rfloor$ represents the floor function, 
    \item For $s_0 \leq d_0$, $V_H(s_0,b)$ is constant with respect to the budget for all $b \geq \lceil H/2 \rceil$, where $\lceil . \rceil$ represents the ceiling function,
    \item Relation \eqref{relation} holds true for $b \geq 0$.
\end{itemize}
As can be clearly seen, the above assumptions hold true for $H=0$ as we proved previously. Now, assume that they hold for some $H > 0$ and consider a horizon of length $H+1$. We will prove that the assumptions hold true for horizon $H+1$ when $s_0>d_0$ and when $s_0 \leq d_0$.

First we will consider $s_0 > d_0$. Using Lemma \eqref{lemma:3} we know that, if $b > 0$, action $d$ is the optimal action to take at time $t=0$ when $s_0 > d_0$. Hence for horizon $H+1$, if $s_0 > d_0$, the value function is given by:
\begin{equation}
    V_{H+1}(s_0,b) = r + V_H(s_0-d_0,b), \quad \forall b \geq 0 \label{V:s>d}
\end{equation}
We will now consider two cases:
\begin{enumerate}
    \item Case 1 : $s_0-d_0 > d_0$. In this case, using the results for horizon $H$, we get that $V_{H+1}(s_0,b)$ becomes constant in budget for all $b \geq \lfloor H/2 \rfloor$. Using the properties of the floor function, we have:
    \begin{equation*}
        \lfloor (H+1)/2 \rfloor \geq \lfloor H/2 \rfloor.
    \end{equation*}
    \item Case 2 : $s_0-d_0 \leq d_0$. In this case, using the results for horizon $H$, we get that $V_{H+1}(s_0,b)$ becomes constant in budget for all $b \geq \lceil H/2 \rceil$. Using the properties of the ceiling and floor functions, we have:
    \begin{equation*}
    \lceil H/2 \rceil = \lfloor (H+1)/2 \rfloor.
\end{equation*}
\end{enumerate}
Thus we can say that $V_{H+1}(s_0,b)$ becomes constant with respect to the budget for all $b \geq \lfloor (H+1)/2 \rfloor$. Also, if we consider three budget values $b^{\prime\prime} = b^\prime + 1 = b + 2$ such that $b \geq 0$, we have:
\begin{equation*}
    V_{H+1}(s_0,b^{\prime\prime}) - V_{H+1}(s_0,b^\prime) = V_H(s_0-d_0,b^{\prime\prime}) - V_H(s_0-d,b^\prime)) \leq V_H(s_0-d_0,b^\prime) - V_H(s_0-d_0,b),
\end{equation*}
where the inequality is due to the assumption that relation \eqref{relation} holds holds for horizon $H$. Also, using \eqref{V:s>d}, we know that :
\begin{equation*}
    V_{H+1}(s_0,b^\prime) - V_{H+1}(s_0,b) = V_H(s_0-d_0,b^\prime) - V_H(s_0-d_0,b)
\end{equation*}
Hence, from this we can say that:
\begin{equation*}
    V_{H+1}(s_0,b^{\prime\prime}) - V_{H+1}(s_0,b^\prime) \leq V_{H+1}(s_0,b^\prime) - V_{H+1}(s_0,b),
\end{equation*}
and thus, relation \eqref{relation} holds true for horizon $H+1$ when $s_0 > d_0$.

Using Lemma \eqref{lemma:4} we know that, if $b > 0$, action $m$ is the optimal action to take at time $t=0$ when $0 < s_0 \leq d_0$. Hence for horizon $H+1$, if $0 < s_0 \leq d_0$, the value function is given by:
\begin{equation}
    V_{H+1}(s_0,b) = \begin{cases}
        r + V_H(s_0-d_0,b),& \text{if } b = 0\\
        r + V_H(s_{max},b-1),& \text{if } b > 0.
    \end{cases}, \label{V:s<d}
\end{equation}
Note that if $s_0 = 0$, then we are in the absorbing state and we get $V(0,b) = 0$ for all $b \geq 0$. Thus, we can say that $V(0,b)$ is constant with respect to budget for all $b \geq \lceil (H+1)/2 \rceil$.

 Now, for $b > 0$, using \eqref{V:s<d}  and the results for horizon $H$, we get that $V_{H+1}(s_0,b)$ becomes constant in budget for all $b-1 \geq \lfloor H/2 \rfloor$ or $b \geq \lfloor H/2 \rfloor + 1$. From the properties of the floor and ceiling functions, we know that:
 \begin{equation*}
     \lfloor H/2 \rfloor + 1 = \lceil (H+1)/2 \rceil
 \end{equation*}
 Hence, we can say that for a horizon $H+1$ and $s_0 \leq d_0$, the value function $V_{H+1}(s_0,b)$ becomes constant in b for all $b \geq \lceil (H+1)/2 \rceil$. 
 
Now, to prove that relation \eqref{relation} holds for $s_0 \leq d_0$, we will consider two cases:
\begin{enumerate}
    \item Consider three budget values $b^{\prime\prime} = b^\prime + 1 = b + 2$ such that $b>0$. Then, using \eqref{V:s<d} we have:
    \begin{equation*}
        V_{H+1}(s_0,b^{\prime\prime}) - V_{H+1}(s_0,b^\prime) = V_H(s_{max},b^{\prime\prime}-1) - V_H(s_{max},b^\prime-1)) \leq V_H(s_{max},b^\prime-1) - V_H(s_{max},b-1),
    \end{equation*}
    where the inequality is due to the assumption that relation \eqref{relation} holds for horizon $H$. Also, using \eqref{V:s<d} we know that:
    \begin{equation*}
        V_{H+1}(s_0,b^\prime) - V_{H+1}(s_0,b) = V_H(s_{max},b^\prime-1) - V_H(s_{max},b-1).
    \end{equation*}
    Hence, we using the above equations we can say that:
    \begin{equation*}
        V_{H+1}(s_0,b^{\prime\prime}) - V_{H+1}(s_0,b^\prime) \leq V_{H+1}(s_0,b^\prime) - V_{H+1}(s_0,b)
    \end{equation*}
    \item Consider three budget values $b^{\prime\prime} = b^\prime + 1 = b + 2$ such that $b=0$. If we are at $s = s_{max}$, we will reach $s=0$ in $\lceil \frac{s_{max}}{d_0} \rceil$ steps if we take only action $o$ repeatedly. We will now consider two cases:
    \begin{enumerate}
        \item Case 1 :  $\lceil \frac{s_{max}}{d_0} \rceil < H$.
        Using \eqref{V:s<d} we have:
    \begin{gather*}
        V_{H+1}(s_0,b) = r + V_H(0,b) = r\\
        V_{H+1}(s_0,b^\prime) = r + V_H(s_{max},b^\prime-1) = (\lceil \frac{s_{max}}{d_0} \rceil + 1)r\\
        V_{H+1}(s_0,b^{\prime\prime}) = r + V_H(s_{max},b^{\prime\prime}-1) = (2\lceil \frac{s_{max}}{d_0} \rceil)r.
    \end{gather*}
    Using the above equations, we have:
    \begin{gather*}
        V_{H+1}(s_0,b^{\prime\prime}) - V_{H+1}(s_0,b^\prime) = (\lceil \frac{s_{max}}{d_0} \rceil-1)r\\
        V_{H+1}(s_0,b^\prime) - V_{H+1}(s_0,b) = (\lceil \frac{s_{max}}{d_0} \rceil)r,
    \end{gather*}
    and thus we can say that:
    \begin{equation*}
        V_{H+1}(s_0,b^{\prime\prime}) - V_{H+1}(s_0,b^\prime) \leq V_{H+1}(s_0,b^\prime) - V_{H+1}(s_0,b)
    \end{equation*}

        \item Case 2 : $\lceil \frac{s_{max}}{d_0} \rceil \geq H$. Using \eqref{V:s<d} we have:
        \begin{gather*}
        V_{H+1}(s_0,b) = r + V_H(0,b) = r\\
        V_{H+1}(s_0,b^\prime) = r + V_H(s_{max},b^\prime-1) = (H + 1)r\\
        V_{H+1}(s_0,b^{\prime\prime}) = r + V_H(s_{max},b^{\prime\prime}-1) = (H + 1)r.
    \end{gather*}
    Using the above equations, we have:
    \begin{gather*}
        V_{H+1}(s_0,b^{\prime\prime}) - V_{H+1}(s_0,b^\prime) = 0\\
        V_{H+1}(s_0,b^\prime) - V_{H+1}(s_0,b) = Hr,
    \end{gather*}
    and thus we can say that:
    \begin{equation*}
        V_{H+1}(s_0,b^{\prime\prime}) - V_{H+1}(s_0,b^\prime) \leq V_{H+1}(s_0,b^\prime) - V_{H+1}(s_0,b)
    \end{equation*}
    \end{enumerate}
    
\end{enumerate}
 
 Hence, we can say that relation \eqref{relation} holds for horizon $H$ when $s_0 \leq d_0$.

 Thus, using induction, we have shown that for any horizon of length $H \geq 0$ the following holds:
 \begin{itemize}
     \item The value function becomes constant with respect to the budget for $b \geq \lfloor H/2 \rfloor$ if $s_0 > d_0$ and $b \geq \lceil H/2 \rceil$ if $s_0 < d_0$.
     \item The increase in the value function decreases with increase in the budget.
 \end{itemize}
\end{proof}

\begin{corollary}
The proof of theorem \ref{theorem:1} and lemma \ref{lemma:2} implies the concavity of the value function with the budget, i.e., if $b^\prime = \alpha b + (1-\alpha)b^{\prime\prime}$ for some $\alpha \in [0,1]$, then we have
\begin{equation}
    V_H(s_0,b^\prime) = \alpha V_H(s_0,b) + (1-\alpha)V_H(s_0,b^{\prime\prime}) \label{eq:concavity}
\end{equation}
\end{corollary}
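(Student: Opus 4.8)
The plan is to derive the corollary directly from the discrete second-difference bound \eqref{relation} proved in Theorem \ref{theorem:1}, together with the monotonicity in Lemma \ref{lemma:2}. Since the budget $b$ is integer-valued, ``concavity of $V_H(s_0,\cdot)$'' is understood via piecewise-linear interpolation, and \eqref{eq:concavity} is to be read as the concavity inequality $V_H(s_0,b^\prime) \ge \alpha V_H(s_0,b) + (1-\alpha) V_H(s_0,b^{\prime\prime})$ whenever $b^\prime = \alpha b + (1-\alpha)b^{\prime\prime}$.

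First I would recast \eqref{relation} in terms of forward differences. Fixing $s_0$, set $\delta(b) := V_H(s_0,b+1) - V_H(s_0,b)$ for $b \ge 0$. Relation \eqref{relation}, which holds for every triple $b^{\prime\prime} = b^\prime + 1 = b + 2$ with $b \ge 0$, is exactly the statement that $\delta(b+1) \le \delta(b)$ for all $b \ge 0$, i.e.\ $\delta$ is non-increasing on $\mathbb{N}_0$. (Although \eqref{relation} is written only for unit spacing, that is precisely ``the second difference is nonpositive'', which is all that is required.) Lemma \ref{lemma:2} gives $\delta(b) \ge 0$, and the ``eventually constant'' part of Theorem \ref{theorem:1} gives $\delta(b) = 0$ once $b$ exceeds $\lfloor H/2 \rfloor$ (resp.\ $\lceil H/2 \rceil$); hence $\delta$ is a non-negative, non-increasing sequence that is eventually zero.

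Next I would telescope. For nonnegative integers $b < b^\prime < b^{\prime\prime}$, the monotonicity of $\delta$ forces its average over $\{b,\ldots,b^\prime-1\}$ to be at least its average over $\{b^\prime,\ldots,b^{\prime\prime}-1\}$, i.e.
\begin{equation*}
    \frac{V_H(s_0,b^\prime) - V_H(s_0,b)}{b^\prime - b} \;\ge\; \frac{V_H(s_0,b^{\prime\prime}) - V_H(s_0,b^\prime)}{b^{\prime\prime} - b^\prime}.
\end{equation*}
Rearranging this chord inequality gives $V_H(s_0,b^\prime) \ge \alpha V_H(s_0,b) + (1-\alpha) V_H(s_0,b^{\prime\prime})$ with $\alpha = (b^{\prime\prime} - b^\prime)/(b^{\prime\prime} - b)$, which is the claim. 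Equivalently, the piecewise-linear interpolant of $b \mapsto V_H(s_0,b)$ has non-increasing slopes $\delta(\lfloor b \rfloor)$ and is therefore a non-decreasing concave function on $[0,\infty)$, yielding \eqref{eq:concavity} for arbitrary $\alpha \in [0,1]$.

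The step I would be most careful about is the boundary/kink bookkeeping: one must confirm that the single inequality \eqref{relation}, valid for every consecutive triple, propagates to the global monotonicity of $\delta$ (it does, by chaining $\delta(b) \ge \delta(b+1) \ge \cdots$), and that the transition from the increasing regime to the constant regime, where $\delta$ drops to $0$, cannot break concavity (it cannot, since $\delta$ is only ever decreasing). No further induction on $H$ is needed; the corollary is a purely combinatorial consequence of the properties of $V_H(s_0,\cdot)$ already established.
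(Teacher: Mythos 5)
Your proof is correct, and it supplies an argument the paper itself never writes down: the corollary in the paper is stated without any proof beyond the assertion that it ``follows from'' Theorem \ref{theorem:1} and Lemma \ref{lemma:2}. Your route --- recasting \eqref{relation} as monotonicity of the forward difference $\delta(b) = V_H(s_0,b+1)-V_H(s_0,b)$, invoking Lemma \ref{lemma:2} for $\delta \geq 0$, and then telescoping to get the chord inequality for arbitrary integer triples before passing to the piecewise-linear interpolant --- is exactly the standard bridge from a unit-spacing second-difference bound to genuine concavity, and the averaging step $\frac{1}{b'-b}\sum_{j=b}^{b'-1}\delta(j) \geq \delta(b'-1) \geq \delta(b') \geq \frac{1}{b''-b'}\sum_{j=b'}^{b''-1}\delta(j)$ is sound. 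Two points where you are actually more careful than the paper deserve emphasis: first, the corollary as printed asserts an \emph{equality} in \eqref{eq:concavity}, which cannot be literally true (it would force $V_H(s_0,\cdot)$ to be affine in $b$, contradicting the theorem's own claim that the function is strictly increasing for small budgets and constant for large ones); your reading of it as the concavity inequality $\geq$ is the only defensible one. Second, the paper never addresses that $b' = \alpha b + (1-\alpha)b''$ is generally non-integral while $V_H$ is defined only on integer budgets; your explicit appeal to the piecewise-linear extension closes that gap. In short, you have written the proof the corollary needed rather than deviating from one the paper gave.
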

  

This proof can be easily extended to any general $c_m \in \mathbb{R}^+$ by scaling the costs and budget with $1/c_m$. Furthermore, this proof works under heavy technical assumptions of full observability and deterministic transitions. However, we empirically observe that that same property is often true for general systems (partially observable and stochastic) and we believe the same proof approach could work, and we leave it for future work.


 We will now use this concavity property to obtain the optimal budget split among the component POMDPs of an $n$-component POMDP. Doing so would provide the approximately optimal policy for the individual component POMDPs and hence the $n$-component POMDP.

\subsection{Optimal Policy Synthesis for Multi-Component POMDP}
Consider a multi-component POMDP with $n$ components and budget $B$ as described in Section ~\ref{subsec:multicomponent}. The size of the state space is $(|\states|)^n$ where $|\states|$ is the size of the state space of each component POMDP. Also, the size of the total action space is $3^n$. Directly applying a POMDP solver to such a large state and action space may not be computationally feasible. Hence, we propose an algorithm which decouples the $n$ component POMDPs by allocating a portion of the total budget to each of them prior to the beginning of the system run. We then compute the  approximate value function for each component POMDP as a function of the budget and then using that, obtain the optimal split of the total budget. 


Given a total budget $B$, we assume that the $i^{th}$ component POMDP is alloted a budget $b_i$ from the total budget. Hence,
\begin{equation}
    b_1 + b_2 + \ldots + b_n = B \label{eq1}
\end{equation}
We now have $n$ independent POMDPs, where each POMDP has its own total budget. We formulate each of them as a b-POMDP and solve each b-POMDP using the POMCP algorithm as discussed in Section ~\ref{subsec:budgeted-POMDP}. 
Let the maximal collected reward, for component $i$, obtained using the POMCP algorithm, for a given initial state $s_0$ and horizon $H$, be denoted by $V^i_H(s_0,b_i)$. We can then find the optimal budget split among the $n$ b-POMDPs by solving a welfare maximization problem. Welfare maximization is the concept of maximizing the overall well-being or welfare of a society, and is achieved by maximizing some measure of social welfare (e.g. maximal collected reward). We thus maximize the total maximal collected reward, for all components, with respect to $b_i$ while adhering to the constraint in \eqref{eq1}, i.e.,
\begin{equation}
\begin{aligned}
&\max_{b_i} \sum_{i = 1}^{n}V^i_H(s_0,b_i)\\
&\text{s.t.} \sum_{i = 1}^{n}b_i = B. \label{maximize}
\end{aligned}
\end{equation}
Using the results we proved in Section \ref{subsec:proof}, we know that $V_H(s_0,b_i)$ is a concave function of $b_i$ in the special case mentioned in Section \ref{subsec:proof} and emperically observe it to be concave in general. The welfare maximization problem then becomes a constrained convex optimization problem. Hence, it can be solved easily and is guaranteed to have a global optimum. The solution to \eqref{maximize} provides the optimal budget allocation for each b-POMDP which in-turn gives us the optimal policy for all $n$ component POMDPs. Let $\pi_i : \states^i \rightarrow A^i$ be a policy for component $i$ with budget $b_i$ obtained using the POMCP algorithm. Then, we define the overall policy, for the $n$-component POMDP, $\pi : \states \rightarrow A$ by $$\pi = \prod_{i=1}^n \pi_i.$$
While such a policy is naturally not guaranteed to be generally optimal on the multi-component POMDP, it provably satisfies the budgetary constraints and performs well in practice. To illustrate its performance on real data, we now move to the implementation and evaluation section. 

\section{Implementation and Evaluation}
In this section we illustrate the utility of the proposed approach for multi-component decision making with budgetary-constraints. In particular, we compare the policy described above with existing approaches on a scenario of multi-component building management. Our implementation utilizes the POMDP.jl \cite{egorov2017pomdps} Julia package for efficiently solving the budgeted-POMDPs using POMCP, as well as CVXPY \cite{diamond2016cvxpy} for solving the convex optimization formulation of the budget allocation problem. The initial budget-split for solving the budget allocation problem is chosen randomly while satisfying the constraint of \eqref{eq1}.

We model the components that comprise a typical administration building with a total size of 10,000 sq. ft. The building comprises multiple components such as lighting systems, roofing components, boilers, and carpeting, where each component's cost of replacement and inspection are based on empirically derived industry averages. Each component's health is defined by the Condition Index (CI) \cite{grussing2006condition}, which takes values between 0 and 100. The condition deteriorates stochastically over time, depending on various factors, and can only be observed through explicit inspections, which incur a cost. The component fails when the CI reaches below a \textit{failure threshold}, which we assume to be 0. Components can also be replaced, restoring their CI to its full value.

The building is associated with an average maintenance budget of \$2,200,000 for a given period of interest. Using historic CI data for each component, we synthesize the transition probabilities of their corresponding Partially Observable Markov Decision Processes (POMDPs). We consider 20 sustainable components from the building, with replacement costs ranging from 0.15\% to 3\% of the total budget and inspection costs ranging from 0.01\% to 0.03\% of the total budget. We scale the total budget to 10,000 units and appropriately scale the replacement and inspection costs of all components while ensuring that they are rounded to the nearest integers. The decision-maker's objective is to maximize the  time until failure of the components by effectively allocating the budget among the components and taking replacement and inspections when needed. As in Section IV.C, we model this objective as a POMDP by assigning a reward of 1 when the CI is greater than the failure threshold and 0 otherwise, and modeling the state of 0 health and budget exhaustion as absorbing states. For our experiments, we consider simulations with a horizon of up to 100 decision steps with a 1 year step size. 

\subsection{Maintenance Policy Synthesis}
In this section we compare the maintenance and inspection policies obtained from the proposed POMDP-based model with a realistic baseline approach. In the baseline approach, a building manager typically schedules component inspection at a regular interval and the true health of the component is only obtained at these regular intervals. In the absence of an inspection, the CI of a component at a given time step is estimated to be the most probable CI state as determined by its CI transition dynamics. The baseline policy used in this section replaces the component if its estimated CI is less than a pre-determined threshold. 

We use \textit{time-to-failure} (TTF), defined as the number of simulation steps until failure, as the performance metric. We run experiments to calculate the TTF for each component by averaging the values obtained over 5 independent simulations with 100 maximum possible simulation steps. We set the maximum tree depth for POMCP rollouts to 50 and use a UCB exploration constant of 10. In the baseline policy, we inspect the CI every 5 steps and replace the component if the estimated CI is below 15. 
Figure \ref{fig:ttf-budget} shows the simulation results comparing the TTF obtained for different budget values using the baseline and the proposed approach. The proposed approach provides a clear advantage over baseline strategy over the entire range of budget values for all 20 components, irrespective of the replacement costs. 
\begin{figure}[htbp]
  \centering
  \includegraphics[width=0.8\textwidth]{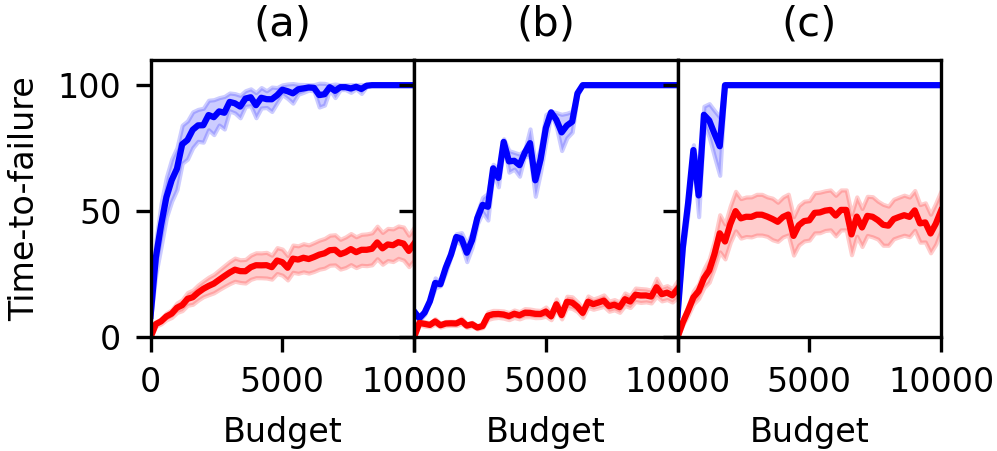}
  \caption{Comparison of the \textcolor{blue}{proposed} and \textcolor{red}{baseline} approaches using time-to-failure for a range of budget values. (a) Overall results obtained by averaging over all components. (b) Results for the Air Handling Unit component with a replacement cost of 250 units. (c) Results for the Lighting Equipment component with a replacement cost of 24 units.}
  \label{fig:ttf-budget}
  \vspace{-0.2cm}
\end{figure}

Figure \ref{fig:component_history} shows sample CI histories for the same component obtained from simulations using the proposed approach and the baseline policy. The proposed approach takes inspection and replacement actions only when deemed necessary based on the latest belief estimate and the potential loss of value due to an inaccurate estimate or due to not taking a replacement action. Such a behavior holds true for every component without any component specific parameter tuning. On the other hand, the estimated state from the baseline policy based on the most-probable transition may not always be the same as the real transition, ultimately resulting in early failures. Although it is possible to enhance the baseline by incorporating component-specific parameters and budget-aware heuristics, our experiments indicate that its performance still lags behind the proposed approach, particularly when the budget is tightly constrained.
\begin{figure}[htbp]
  \centering
  \includegraphics[width=0.8\textwidth]{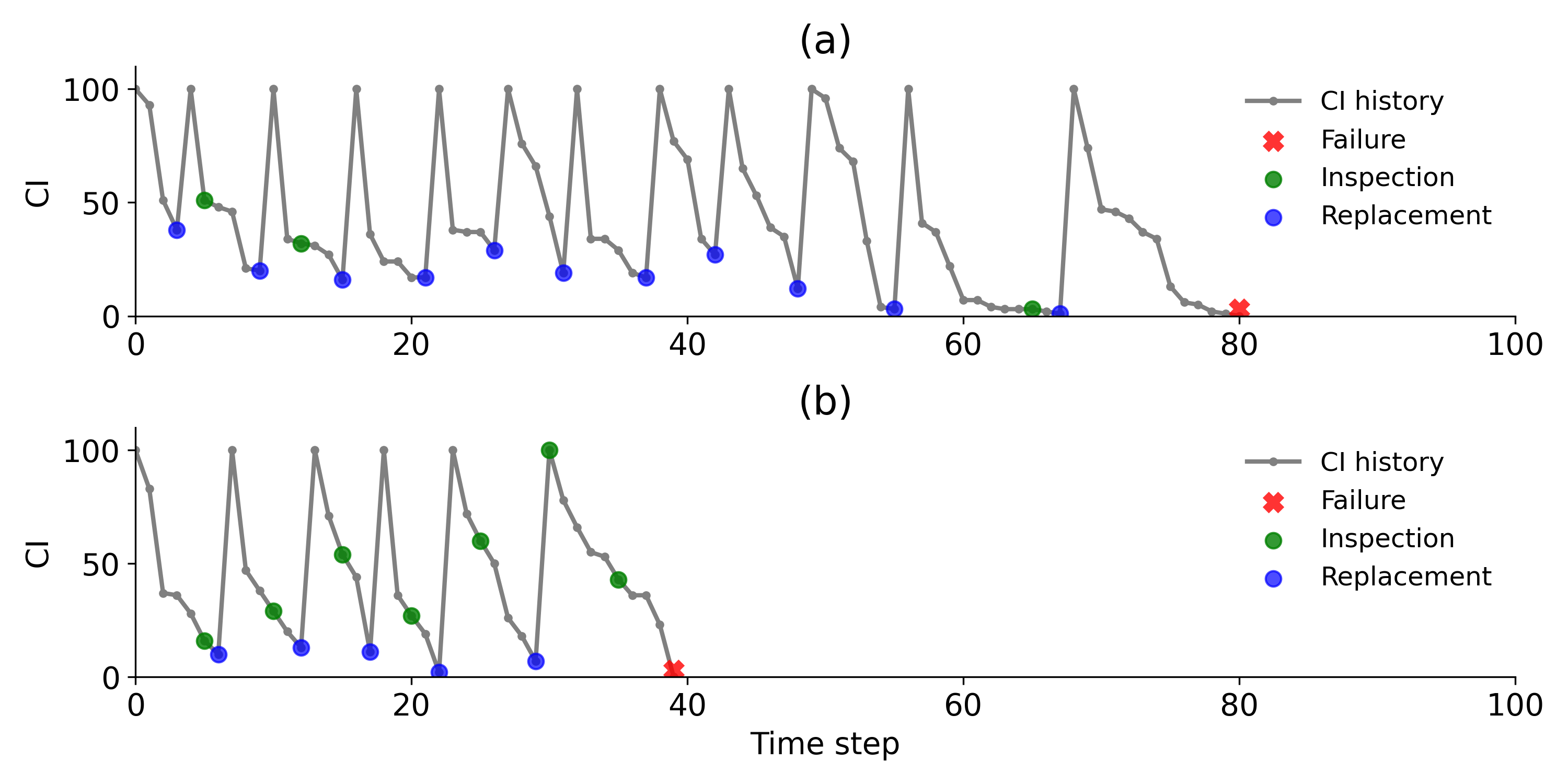}
  \caption{Sample condition index (CI) histories illustrating the performance of the proposed policy when compared to the baseline for the Boiler component with a replacement cost of 45 units, an inspection cost of 1 unit, and a total budget of 500 units. (a) CI history using proposed approach showing failure at 80 time steps. (b) Baseline approach failing at 39 time steps.}
  \label{fig:component_history}
\end{figure}
\vspace{-0.4cm}
\subsection{Budget Allocation}
We demonstrate the effectiveness of our proposed budget allocation approach by comparing it to a baseline that depends on two component properties: (i) mean-time-to-failure (MTTF) which is the expected number of steps a component takes for its condition index to go below the failure threshold when starting from maximum possible condition index, and (ii) the replacement cost of the component. The baseline allocation is proportional to ratio of the component's replacement cost and MTTF.

We quantify the performance of the budget allocation algorithms by running 20 independent simulations over all the components using the allocated budgets and calculating the overall TTF for the building. To ensure fairness, we compare both budget allocation algorithms by running simulations using policies obtained by the same decision making strategy: the POMCP-based approach. 
\begin{figure}[htbp]
  \centering
  \includegraphics[width=0.8\textwidth]{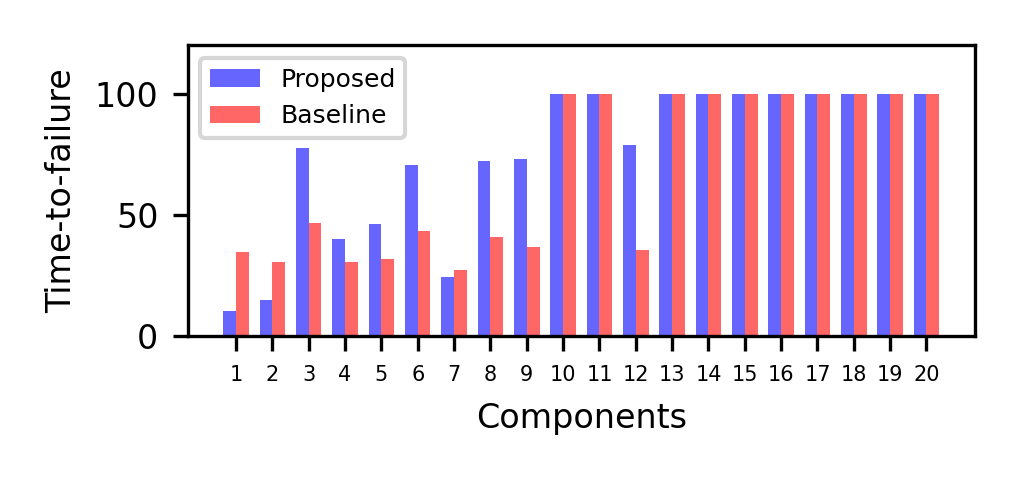}
  \vspace{-0.5cm}
  \caption{Comparison of baseline and proposed budget allocation approaches for the all 20 components for an overall budget of 10,000 units.}
  \label{fig:allocation}
  \vspace{-0.2cm}
\end{figure}
Figure \ref{fig:allocation} summarizes the TTF results from the baseline and the proposed approaches for all components in the building. The proposed allocation approach achieves an overall TTF of 1510, outperforming the baseline that achieves an overall TTF of 1355. Hence the proposed approach maximizes the overall TTF in accordance with the objective defined by \eqref{maximize}. Analyzing individual component data we observe that instances where the proposed approach underperforms the baseline exhibit only slight differences in TTF values. In contrast, when the proposed approach outperforms the baseline, we observe a significant improvement. Note that the maximal TTF of 100 is achieved by both strategies for 50\% of components, the proposed strategy performs better for 35\% of the components, and the baseline performs better only for 15\% of the components.

\begin{table}[!h]
    \centering
    \vspace{7pt}
    \begin{tabular}{c c}
        \toprule
        Number of Components & Time (mean $\pm$ std. dev. of 7 runs)\\
        \midrule
        5 & $333 ms \pm 21.4 ms$\\
        10 & $412 ms \pm 37.1 ms$ \\
       20 & $552 ms \pm 31.6 ms$ \\
        \bottomrule
    \end{tabular}
    \vspace{0.5cm}
    \caption{Comparison of time taken to find optimal budget split among 5, 10 and 20 components respectively, for a total budget of 10000 units.}
    \vspace{-0.8cm}
    \label{tab:time}
\end{table}
The results in Table \ref{tab:time} present the time taken to solve the optimization problem given by \eqref{maximize}. As can be clearly observed, the solution time increases with an increase in the number of components. However, the values are always of the order of milliseconds.

\section{Conclusions and Future Work}
In this paper, we formulated the problem of optimal policy synthesis for a multi-component POMDP with a budget, in the sense of maximizing the time before reaching an absorbing state. We first introduced a b-POMDP model to facilitate optimal planning in POMDPs while adhering to budget constraints. Next, we showed that the value function or maximal collected reward for b-POMDPs, under significant assumptions of full observability and deterministic transitions, is a concave function of the budget. We then presented an algorithm to find the optimal budget split among the component POMDPs of an $n$-component POMDP with a given total budget $B$. The budget-splitting problem was posed as a welfare maximization problem. The concavity of the maximal collected reward, with respect to the budget, makes the problem a convex optimization problem. The experimental evaluations of the proposed algorithm, in an infrastructure component management scenario, verify its effectiveness in terms of performance. Performing simulations on real data, we observe that our algorithm vastly outperforms the policies currently used in practice. 

There are two possible directions of future work. While the proposed approach makes it possible to compute an approximately optimal policy in a feasible amount of time, the computational cost of using the POMCP algorithm is still high. Our first direction of work is to reduce this cost by incorporating a learning framework so as eliminate repeated runs of POMCP. Finally, the budget allocation scheme is fixed in the sense that the budget-split is done before the start of the planning horizon. The second direction of work is to consider other efficient budget allocation methods. A sequential algorithm for optimal computing budget allocation is presented in \cite{futurework1}. Applying this algorithm to our problem may result in more accurate budget allocation. Similarly, another method which can be explored is the allocating method presented in \cite{futurework2}. Also, following these methods may allow us to generalize our algorithm for cases where the value function does not satisfy the concavity property. Furthermore, another direction is to derive an optimal dynamic budget allocation scheme to account for change in transition probabilities of the component states during the planning horizon and also account for a cyclical budget.

\section*{Acknowledgments}
This work was supported through a cooperative agreement between the University of Illinois Urbana-Champaign and the the United States Army Engineer Research and Development Center. We acknowledge the kind help of Trevor Betz, Louis Bartels, Ryan Smith, and Zachary Sunberg in the discussions leading to this paper.

\bibliographystyle{unsrt}  
\bibliography{references}

\end{document}